\documentclass[reqno,11pt,centertags]{amsart}
\usepackage{amsmath,amsthm,amscd,amssymb,latexsym,upref}
\date{\today}

%



\newcommand{\bbD}{{\mathbb{D}}}

\newcommand{\bbR}{{\mathbb{R}}}

\newcommand{\bbC}{{\mathbb{C}}}

\newcommand{\bbT}{{\mathbb{T}}}
\DeclareMathAlphabet{\mathpzc}{OT1}{pzc}{m}{it}

\newcommand{\cE}{{\mathcal{E}}}

\newcommand{\cI}{{\mathcal{I}}}
\newcommand{\cJ}{{\mathcal{J}}}
\newcommand{\cK}{{\mathcal{K}}}

\newcommand{\cM}{{\mathcal{M}}}

\newcommand{\cS}{{\mathcal{S}}}

\newcommand{\fA}{{\mathfrak{A}}}
\newcommand{\fB}{{\mathfrak{B}}}

\newcommand{\fM}{{\mathfrak{M}}}

\newcommand{\fU}{{\mathfrak{U}}}

\newcommand{\fa}{{\mathfrak{a}}}
\newcommand{\fb}{{\mathfrak{b}}}

\newcommand{\fj}{{\mathfrak{j}}}

\newcommand{\z}{\zeta}

\newcommand{\pd}{{\partial}}

\def\restr#1{\,\vrule\,\lower1ex\hbox{$#1$}}

\def\D{\Delta}

\def\G{\Gamma}

\def\l{\lambda}

\def\L{\Lambda}

\def\s{\sigma}

\def\t{\theta}

\def\z{\zeta}


\renewcommand{\Re}{\text{\rm Re}\,}
\renewcommand{\Im}{\text{\rm Im}\,}

\newcommand{\tr}{\text{\rm tr}\,}

\allowdisplaybreaks \numberwithin{equation}{section}
\newtheorem{theorem}{Theorem}[section]

\newtheorem{lemma}[theorem]{Lemma}
\newtheorem{proposition}[theorem]{Proposition}

\theoremstyle{definition}

\newtheorem{remark}[theorem]{Remark}
\newtheorem{problem}[theorem]{Problem}

\begin{document}

\title[Szeg\H{o}'s Theorem for Canonical Systems]{Szeg\H{o}'s Theorem for Canonical Systems: the Arov Gauge and a Sum Rule}

\author[D.\ Damanik]{David Damanik}

\address{Department of Mathematics, Rice University, Houston, TX~77005, USA}

\email{damanik@rice.edu}

\thanks{D.\ D.\ was supported in part by NSF grant DMS--1700131 and by an Alexander von Humboldt Foundation research award.}

\author[B.\ Eichinger]{Benjamin Eichinger}

\address{Department of Mathematics, Rice University, Houston, TX~77005, USA}

\email{be11@rice.edu}

\thanks{B.\ E.\ was supported by the Austrian Science Fund FWF, project no: J 4138-N32.}

\author[P.\ Yuditskii]{Peter Yuditskii}

\address{Abteilung f\"ur Dynamische Systeme und Approximationstheorie, Johannes Kepler Universit\"at Linz, A-4040 Linz, Austria}

\email{Petro.Yudytskiy@jku.at}

\thanks{P.\ Yu.\ was supported by the Austrian Science Fund FWF, project no: P29363-N32.}

\date{\today}

\maketitle

\begin{abstract}
We consider canonical systems and investigate the Szeg\H{o} class, which is defined via the finiteness of the associated entropy functional. Noting that the canonical system may be studied in a variety of gauges, we choose to work in the Arov gauge, in which we prove that the entropy integral is equal to an integral involving the coefficients of the canonical system. This sum rule provides a spectral theory gem in the sense proposed by Barry Simon.
\end{abstract}

\section{Introduction}\label{s.1}

Simon's monograph \cite{S11} is centered around Szeg\H{o}'s theorem and its descendants. Since the theorem and the philosophy underlying it are important to this paper, let us briefly describe them. Formulated as \cite[Theorem~1.8.6]{S11}, Verblunsky's form of Szeg\H{o}'s theorem reads
\begin{equation}\label{e.szthm1}
\prod_{n = 0}^\infty (1 - |\alpha_n|^2) = \exp \left( \int \log (w(\theta)) \, \frac{d\theta}{2\pi} \right).
\end{equation}

Here, the setting is as follows. Choosing any probability measure $\mu$ on the unit circle $\partial \bbD = \{ z \in \bbC : |z| = 1 \}$ that is not supported on a finite set, one forms the associated monic orthogonal polynomials $\{ \Phi_n \}_{n \ge 0}$ in $L^2(\partial \bbD, d\mu)$ via the Gram-Schmidt procedure. These polynomials obey the recursive relations
$
\Phi_{n+1}(z) = z \Phi_n(z) - \bar \alpha_n \Phi_n^*(z)
$
with uniquely determined $\alpha_n \in \bbD = \{ z \in \bbC : |z| < 1 \}$, called the Verblunsky coefficients. Here, $\Phi_n^*$ arises from $\Phi_n$ by reversing the order of the coefficients of the polynomial and taking complex conjugates of them. This determines the left-hand side of \eqref{e.szthm1}. On the right-hand side of \eqref{e.szthm1}, $w$ denotes the Radon-Nikodym derivative of the absolutely continuous part of $\mu$ with respect to the normalized Lebesgue measure on the unit circle.

The identity \eqref{e.szthm1} holds for all such measures $\mu$. In particular, focusing on the property of both sides taking a non-zero value, we have the weaker statement that
\begin{equation}\label{e.szthm2}
\sum_{n = 0}^\infty |\alpha_n|^2 < \infty \; \Leftrightarrow \; \int \log (w(\theta)) \, \frac{d\theta}{2\pi} > - \infty.
\end{equation}

In the formulation \eqref{e.szthm2}, Szeg\H{o}'s theorem represents what Simon calls a \textit{gem of spectral theory} in \cite[Section~1.4]{S11}: a one-to-one correspondence between a class of coefficients and a class of measures.

Naturally, it has been a natural goal to pursue the proof of other gems of spectral theory, and more specifically, further results in the spirit of or in very close analogy to Szeg\H{o}'s theorem. This includes the Killip-Simon theorem for Hilbert-Schmidt perturbations of the free Jacobi matrix \cite{KS03} and follow-up work \cite{DKS10, KS09, Yu18} concerning Jacobi matrices with periodic or finite-gap quasi-periodic background and a continuum analog of the original result. The results obtained by 2010 are covered in Simon's monograph \cite{S11}, whereas Yuditskii's work \cite{Yu18} solved one of the major open problems motivated by the work covered in \cite{S11}.

The settings mentioned so far, orthogonal polynomials on the unit circle (closely related to CMV matrices as canonical representations of unitary operators of multiplicity one), Jacobi matrices (closely related to orthogonal polynomials on the real line), and continuum Schr\"odinger operators on the half-line (natural counterparts to Jacobi matrices with constant off-diagonal terms), include many of the popular classes of unitary or self-adjoint operators. Recently, there has been a push towards unifying the consideration of these classes under the umbrella of canonical systems; see, for example, the recent monograph \cite{R18} by Remling.

While we will make the setting explicit only in the next section, let us point out now that from the perspective of this recent push, it is a very natural goal to establish Szeg\H{o}'s theorem for canonical systems. In a pair of recent papers, \cite{BD17, BD}, Bessonov and Denisov have obtained a result in the spirit of \eqref{e.szthm2} for canonical systems. This result is a gem of spectral theory in the sense of Simon as it establishes a one-to-one correspondence between a class of coefficients and a class of measures. However, in \cite{BD17, BD} this equivalence of finiteness statements is not derived from an identity in the spirit of \eqref{e.szthm1}, but rather from a pair of inequalities.

This raises the natural question of whether there is an underlying identity in the spirit of \eqref{e.szthm1} in the general setting of canonical systems and it is the purpose of this paper to show that one indeed exists. In order to uncover it, we will have to change perspective. There is a ``gauge freedom" for canonical systems, and using some highly natural ``gauge fixing condition,'' it will not be too difficult to establish our version of  \eqref{e.szthm1}; this identity will be stated in Theorem~\ref{t.main} below.

The organization of the paper is as follows. We describe the setting and state the main result, Theorem~\ref{t.main}, in Section~\ref{s.2}. The proof of Theorem~\ref{t.main} is then given in Section~\ref{s.3}. Since the two different gauges, the one Bessonov and Denisov work in and the one in which we prove Theorem~\ref{t.main}, are obviously crucial to our discussion, we include two appendices that explain the historical origin and importance of each of them.

\subsection*{Acknowledgements}
P.~Yuditskii would like to thank David M\"uller for helpful discussions. D.~Damanik and B.~Eichinger were supported in part by Austrian Science Fund FWF, project no: P29363-N32.

\section{Setting and Main Result}\label{s.2}

Let $A(t)$ and $B(t)$ be $2 \times 2$ matrix-functions with entries from $L^1_\mathrm{loc}([0,\infty))$ 
such that
$$
A(t)\ge 0,\  B(t)^*=-B(t),
$$
and
$$
\tr A(t)\fj=\tr B(t)\fj=0, \quad\text{for}\ \
\fj=\begin{bmatrix}-1&0\\0&1
\end{bmatrix}.
$$
By a (two dimensional) \textit{canonical system} associated with these data we mean the differential equation of the form
\begin{equation}\label{20jun1}
\pd_t\fA(z,t)\fj=\fA(z,t)\left(-izA(t)+B(t)\right), \quad \fA(z,0)=I,\quad z\in\bbC.
\end{equation}

We say that $\cE(z)$ belongs to the Schur class $\cS$ if it is holomorphic in the upper half plane $\bbC_+$ and $|\cE(z)|\le 1$.

Assume that the system \eqref{20jun1} is given on the positive half-axis $\bbR_+$ and
$$
\int_0^\infty \tr A(t) \, dt = \infty.
$$
In this case we can define the \textit{Schur spectral function} $w$ as the limit
\begin{equation}\label{23may2}
w(z) = \lim_{t \to \infty} \frac{\fa_{11}(z,t)\cE(z) + \fa_{12}(z,t)}{\fa_{21}(z,t)\cE(z) + \fa_{22}(z,t)}, \quad \fA(z,t) = \begin{bmatrix} \fa_{11}(z,t) & \fa_{12}(z,t) \\ \fa_{21}(z,t) & \fa_{22}(z,t) \end{bmatrix},
\end{equation}
which does not depend on $\cE(z)\in\cS$; see \cite[Theorem XI]{dB61}.

Borrowing terminology from Yang-Mills theory, we say that $w(z)$ is an \textit{observable}, while the corresponding chain $\{\fA(z,t)\}$ possesses a \textit{gauge freedom}, that is, we can pass to an equivalent chain
$$
\fB(t,z) = \fA(z,t) \fU(t), \quad \fU(t) \in SU(1,1),
$$
with the same Schur spectral function.

Recall that a matrix $\fU$ belongs to $SU(1,1)$ if
$$
(i)\ \fU^*\fj\fU=\fj,\quad (ii)\ \det\fU(t)=1.
$$
Matrices obeying ($i$) are also called $\fj$-unitary.

One of the best known conditions fixing the gauge, $B(t)=0$, deals with the concept of $\fj$-modulus, which was the fundamental concept in Potapov's theory of $\fj$-contractive analytic matrix functions \cite{P0}, see also \cite{P1,P2}. We will call this the PdB-gauge (Potapov-de Branges). Some pertinent historical remarks are given in Appendix~\ref{a.1}.

Another normalization was especially promoted by Arov:
\begin{equation}\label{e.agauge}
\text{A-gauge:} \quad A(t) + B(t) \text{ is upper triangular and } \tr B(t)=0.
\end{equation}
This normalization arises in the theory of unitary extensions of isometries \cite{AG}. A short discussion is contained in Appendix~\ref{a.2}.

We say that the Schur spectral function $w(z)$ belongs to the \emph{Szeg\H{o} class} if the \textit{entropy functional} $\cI(w)$ \cite{AK1, AK2} is finite:
\begin{equation}\label{23may7}
\cI(w) := \frac 1 \pi \int_{\bbR} \log \frac 1{1-|w(x)|^2} \frac{dx}{1+x^2} < \infty.
\end{equation}

Recently, Bessonov and Denisov \cite{BD} found a characterization of the Szeg\H{o} class in terms of the canonical system data for the PdB-gauge. They proposed an explicit  expression $\tilde\cK(A)$ (functional) given in terms of $A(t)$ (for the exact formula see \cite{BD}) such that
$$
c_1\cI(w)\le \tilde\cK(A)\le c_2\cI(w) e^{c_2\cI(w)}
$$
with absolute positive constants $c_1, c_2$. As a consequence, $\cI(w)$ is finite if and only if $\tilde\cK(A)$ is finite.

This raises the natural question of whether the finiteness conditions on the two sides, measure and coefficients, can be expressed in such a way that the equivalence of the two finiteness statements can be traced back to an identity between the two expressions in question. The main result of this note is the following theorem, which in fact establishes such an identity.

\begin{theorem}\label{t.main}
For a canonical system \eqref{20jun1} under the A-gauge condition \eqref{e.agauge}, we have
\begin{equation}\label{25may}
\cI(w)=\int_0^\infty\left( \tr A(t)-2\sqrt{\det A(t)}\right)dt.
\end{equation}
\end{theorem}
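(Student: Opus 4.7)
The plan is to establish \eqref{25may} by differentiating $\cI(w_T)$ in a truncation parameter $T$ and showing that the derivative equals $\tr A(T) - 2\sqrt{\det A(T)}$ pointwise in $T$. The A-gauge \eqref{e.agauge} together with the standing assumptions forces
\[
A(t) = \begin{pmatrix} a & b \\ \bar b & a\end{pmatrix},\qquad B(t) = \begin{pmatrix} 0 & b \\ -\bar b & 0\end{pmatrix},\qquad a(t)\geq |b(t)|\geq 0,\ a\in\bbR,
\]
so that $\tr A - 2\sqrt{\det A} = 2|b|^2/(a+\sqrt{a^2-|b|^2})\geq 0$. Define the truncated Schur function $w_T(z) := \fa_{12}(z,T)/\fa_{22}(z,T)$, which arises by extending the system trivially ($A\equiv I$, $B\equiv 0$) past $T$; standard Weyl disk arguments give $w_T\to w$ locally uniformly on $\bbC_+$ and, combined with the monotonicity $\pd_T\cI(w_T)\geq 0$ that emerges below, $\cI(w_T)\to\cI(w)$. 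The $\fj$-unitarity of $\fA(x,T)$ on $\bbR$ yields $|\fa_{22}(x,T)|^2 - |\fa_{12}(x,T)|^2 = 1$, whence
\[
\cI(w_T) = \frac{2}{\pi}\int_{\bbR} \log|\fa_{22}(x,T)|\,\frac{dx}{1+x^2}.
\]

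Set $\phi_T(z) := \fa_{21}(z,T)/\fa_{22}(z,T)$. From $\pd_T\fA = \fA(-izA+B)\fj$ one computes directly that $\pd_T \log|\fa_{22}(x,T)|^2 = 2\Re[(1-ix)\,b(T)\,\phi_T(x)]$, and combining this with the identity $(1-ix)/(1+x^2) = -i/(x-i)$ produces
\[
\pd_T \cI(w_T) = \frac{2}{\pi}\,\Im\!\left[b(T)\int_{\bbR} \frac{\phi_T(x)}{x-i}\, dx\right].
\]
The A-gauge provides the crucial simplification at $z=i$: because $-iz|_{z=i}=1$, the matrix $(-izA+B)\fj|_{z=i} = (A+B)\fj$ is upper triangular (directly from \eqref{e.agauge}), so $\pd_T\fa_{21}(i,T) = -a(T)\fa_{21}(i,T)$ and, with $\fa_{21}(i,0)=0$, one has $\phi_T(i)\equiv 0$. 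Granting that $\fa_{22}(\cdot,T)$ is zero-free in $\bbC_+$ (a standard regularity property of A-gauge transfer matrices), $\phi_T$ is holomorphic in $\bbC_+$, and closing the contour in the upper half plane gives
\[
\int_{\bbR} \frac{\phi_T(x)}{x-i}\,dx = -i\pi\, c(T),\qquad c(T) := \lim_{y\to\infty}\phi_T(iy,T),
\]
the $-i\pi c(T)$ term arising from the semicircular arc at infinity.

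To identify $c(T)$, note that along $z=iy$ the ODE for $(\fa_{21},\fa_{22})$ yields the Riccati equation
\[
\pd_T \phi = -(y+1)\,b\,\phi^2 - 2ya\,\phi - (y-1)\,\bar b.
\]
As $y\to\infty$, the leading algebraic balance $b\phi^2 + 2a\phi + \bar b = 0$ has stationary roots $\phi_\pm = (-a\pm\sqrt{\det A})/b$; the root
\[
\phi_+(T) = -\frac{\overline{b(T)}}{a(T)+\sqrt{\det A(T)}}
\]
is linearly stable at rate $-2y\sqrt{\det A(T)}$, so the trajectory beginning at $\phi(0)=0$ is attracted to $\phi_+(T)$ pointwise in $T>0$. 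Hence $c(T) = \phi_+(T)$, and substituting,
\[
\pd_T\cI(w_T) = -2\,\Re[b(T)c(T)] = \frac{2|b(T)|^2}{a(T)+\sqrt{\det A(T)}} = \tr A(T) - 2\sqrt{\det A(T)}.
\]
Integrating from $T=0$ (where $w_0\equiv 0$ and $\cI(w_0)=0$) to $T=\infty$ and using $\cI(w_T)\to\cI(w)$ completes the proof.

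The two main obstacles are (i) a clean adiabatic/WKB justification of the Riccati limit $\phi_T(iy,T)\to\phi_+(T)$ uniformly enough in $T$ to legitimize the contour computation---the principal difficulty, since the stability rate $\sqrt{\det A}$ may degenerate; and (ii) the zero-freeness of $\fa_{22}(\cdot,T)$ in $\bbC_+$, which should follow from the $\fj$-inner structure developed for A-gauge transfer matrices in Appendix~\ref{a.2}.
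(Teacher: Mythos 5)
Your overall plan is consistent with the truth of the theorem (in the paper's notation one indeed has $\cI(w_T)=2\bigl(\int_0^T a\,dt-\s_T\bigr)$, so the derivative you aim for is the right one, and your closing arithmetic is correct), and the worry you list as obstacle (ii) is not an issue: $\fA(z,T)\fj\fA(z,T)^*\ge\fj$ in $\bbC_+$ gives $|\fa_{22}|^2\ge 1+|\fa_{21}|^2$ there, so $1/\fa_{22}$ and $\fa_{21}/\fa_{22}$ lie in $H^\infty(\bbC_+)$. But two steps that carry essentially all of the analytic content are asserted rather than proved. First, the convergence $\cI(w_T)\to\cI(w)$ does not follow from ``$w_T\to w$ locally uniformly plus monotonicity'': those facts only give the lower semicontinuity inequality $\cI(w)\le\lim_T\cI(w_T)$, and nothing in your sketch rules out that the monotone limit exceeds $\cI(w)$. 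The missing inequality $\limsup_T\cI(w_T)\le\cI(w)$ is exactly where the paper works: it uses the de Branges representation $w=\frac{\fa_{11}\cE+\fa_{12}}{\fa_{21}\cE+\fa_{22}}$ with a Schur parameter $\cE(\cdot,T)$, the identity $1-|w|^2=\frac{1-|\cE|^2}{|\fa_{22}|^2|1+\cE\,\fa_{21}/\fa_{22}|^2}$ on $\bbR$, and the fact that $1+\cE\,\fa_{21}/\fa_{22}$ is outer with vanishing logarithmic integral because $\fa_{21}(i,T)=0$ (the A-gauge normalization), to conclude $\cI(w)\ge 2\bigl(\int_0^T a\,dt-\s_T\bigr)=\cI(w_T)$. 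You need this (or an equivalent) argument; it is not a ``standard Weyl disk'' remark.

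Second, the identification $\pd_T\cI(w_T)=\tr A(T)-2\sqrt{\det A(T)}$ via the contour computation is not established. After differentiating under the integral (itself needing justification: the integrand $\Re[(1-ix)\,b(T)\phi_T(x)]/(1+x^2)$ decays only like $1/|x|$, so the integral is at best improper), the evaluation $\int_\bbR\frac{\phi_T(x)}{x-i}\,dx=-i\pi c(T)$ requires control of $\phi_T(z)$ as $|z|\to\infty$ over the whole closing arc, i.e.\ uniformly up to the real axis, where $\phi_T$ has no limit and where your adiabatic rate $2y\sqrt{\det A}$ gives nothing; the radial limit along $z=iy$ alone is insufficient. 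Moreover the Riccati/adiabatic claim $\phi_T(iy)\to\phi_+(T)$ ``pointwise in $T$'' is delicate precisely in the cases that matter: $A,B$ are only $L^1_{\mathrm{loc}}$, and $\det A$ may vanish on sets of positive measure, where $|\phi_+|=1$ and the stability of the stationary root degenerates. What your Riccati limit is really computing is the exponential type of $\fa_{22}(\cdot,T)$ along the imaginary axis, i.e.\ the quantity $\s_T$; the paper obtains this rigorously by a different route, factorizing $1/\fa_{22}$ into inner part $e^{i\s_T z}$ times outer, passing to the Potapov--de Branges gauge $\fB(z,t)=\fA(z,t)\fA(0,t)^{-1}$, and invoking de Branges' Theorem~39 to get $\s_T=\int_0^T\sqrt{\det A(t)}\,dt$. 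As it stands, your key asymptotic claim is equivalent to, not a proof of, that step; either supply a genuine adiabatic/WKB argument valid for $L^1_{\mathrm{loc}}$ coefficients and degenerate $\det A$ together with the arc estimates, or replace this part by the inner--outer factorization and the type computation as in the paper.
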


In the proof we follow mainly the original paper \cite{AK2} of Arov and Krein. Certain technical details are taken from \cite{YU}, where the extremal entropy functional technique was applied to the character-automorphic Nehari problem.

\begin{remark}
We work with Schur spectral functions instead of Titchmarsh-Weyl (resolvent) functions. Passing to resolvent functions allows us to relate our entropy functional \eqref{23may7} to the one given in \cite{BD}. To be more precise: by the integral representation for functions with positive imaginary part, we have
\begin{align}\label{eq:FLTSchur}
\Re \frac{1 - w(z)}{1 + w(z)} = b \, \Im z + \frac{1}{\pi} \int_{\bbR} \frac{\Im z}{|x-z|^2} \, d\mu(x),
\end{align}
where $b\geq 0$ and $\int\frac{d\mu(x)}{1+x^2}<\infty$. In particular, if we assume that $w(i)>0$, this implies that
\begin{align*}
\frac{1}{1+w(i)}=\frac{1}{2}\left(1+b+\frac{1}{\pi}\int_{\bbR}\frac{d\mu(x)}{1+x^2}\right).
\end{align*}
Denoting by $\mu_{ac}'(x)$ the density of $\mu$ with respect to the Lebesgue measure, we have
\begin{align*}
\mu_{ac}'(x)=\Re \frac{1-w(x)}{1+w(x)}=\frac{1-|w(x)|^2}{|1+w(x)|^2}.
\end{align*}
Using that $1+w$ is outer, we obtain that
\begin{align*}
-\frac{1}{\pi}\int_{\bbR}\log\mu_{ac}'(x) \frac{dx}{1+x^2}=\cI(w)+2\log(1+w(i)).
\end{align*}
Hence,
\begin{align*}
\cI(w)=2\log\frac{1}{2}\left(1+b+\frac{1}{\pi}\int_{\bbR}\frac{d\mu(x)}{1+x^2}\right)-\frac{1}{\pi}\int_{\bbR}\log\mu_{ac}'(x) \frac{dx}{1+x^2}.
\end{align*}
\end{remark}

\medskip

An entire matrix function $\fA(z)$ is called $\fj$-\emph{expanding} in $\bbC_+$ if
\begin{equation}\label{23may40}
\fA(z)^*\fj\fA(z)-\fj\ge 0, \quad z\in\bbC_+.
\end{equation}
$\fA(z)$ is called $\fj$-\emph{inner} if in addition
\begin{equation}\label{23may3}
\fA(z)^* \fj \fA(z) - \fj = 0
\end{equation}
on the real axis.

\begin{remark}
A monotonic family of $\fj$-inner matrix functions $\fA(z,t)$ obeys the property
$$
\fA(z,t_2)^*\fj\fA(z,t_2)-\fj\ge \fA(z,t_1)^*\fj\fA(z,t_1)-\fj
$$
for $t_2>t_1$. As soon as $t$ is chosen in a way that $\fA(z,t)$ is differentiable, we define $\fM(z,t)=\fA(z,t)^{-1}\partial_t\fA(z,t)\fj$. Then the monotonicity property implies that $\fM(z,t)$ has positive real part, $\fM(z,t)+\fM(z,t)^*\geq 0$. Assuming in addition that $\fA$ is $\fj$-inner, entire, we obtain that
\begin{align*}
\fM(z,t)= -izA(t)+ B(t),
\end{align*}
where $ A(t)\geq 0$ and $B(t)=-B(t)^*$. Thus, we have \eqref{20jun1}.

Conversely, if $\fA(z,t)$ is given by this system, we have
\begin{align}\label{22may3}
\pd_t (\fA(t,z) \fj \fA(t,z)^* - \fj) & = \fA(z,t)(-iz A(t) + B(t) + i \overline{z} A(t) + B(t)^*) \fA(z,t)^* \nonumber \\
& = \frac{z-\overline{z}} i \fA(z,t)A(t)\fA(z,t)^* \ge 0,\quad z\in\bbC_+.
\end{align}
That is, $\{\fA(z,t)\}$ is a monotonic chain of $\fj$-inner entire matrix functions.
\end{remark}

\begin{remark}
Note that $w(z)$ is in the Schur class if
$$
\begin{bmatrix}w(z)\\ 1
\end{bmatrix}^*\fj\begin{bmatrix}w(z)\\ 1
\end{bmatrix}\ge 0,
$$
whereas $m(z)$ is a Titchmarsh-Weyl function, i.e. it has positive imaginary part in $\bbC_+$, if
$$
i\begin{bmatrix}m(z)\\ 1
\end{bmatrix}^*\cJ\begin{bmatrix}m(z)\\ 1
\end{bmatrix}\ge 0,\quad\text{for}\ \ \cJ=
\begin{bmatrix}
0& 1\\
-1& 0
\end{bmatrix}.
$$
While passing from functions in $\cS$ to functions with positive imaginary part requires a fractional linear transform, see \eqref{eq:FLTSchur}, we note that, since
$$
iU\cJ U^*=\fj,
$$
for some unitary matrix $U$, passing from $\fj$-inner to $\cJ$-inner matrix functions corresponds just to a conjugation by $U$. That is,
\begin{align*}
\fA(z)^*\fj\fA(z)-\fj\geq 0\quad\iff\quad i(\cM(z)^*\cJ\cM(z)-\cJ)\geq 0,
\end{align*}
where $\cM(z)=U^*\fA(z) U$.
\end{remark}

\section{Proof of the Main Theorem}\label{s.3}

Note that the following condition is equivalent to \eqref{23may40},
\begin{equation}\label{23may4}
\fA(z)\fj\fA(z)^*-\fj\ge 0, \quad z\in\bbC_+.
\end{equation}
Due to \eqref{23may4} we have
$$
-|\fa_{21}(z)|^2+|\fa_{22}(z)|^2\ge 1.
$$
That is,
$$
|\fa_{21}(z)/\fa_{22}(z)|^2+|1/\fa_{22}(z)|^2\le 1
$$
and both $\fa_{21}(z)/\fa_{22}(z)$ and $1/\fa_{22}(z)$ belong to $H^\infty$ in the upper half plane.

Furthermore, for the A-gauge, we have
$$
A(t) = \begin{bmatrix} a(t) & b(t) + i c(t) \\ b(t) - i c(t) & a(t) \end{bmatrix}, \quad
B(t) = \begin{bmatrix} 0 & b(t) + i c(t) \\ -b(t) + i c(t) & 0 \end{bmatrix}.
$$
Respectively, $\fA(i,t)$ is upper triangular and the main diagonal entries are positive. We set
$$
\fA(i,t)=\begin{bmatrix} \l(t)^{-1}&h(t)\\ 0&\l(t)
\end{bmatrix}, \quad \l(t)\ge 1.
$$
That such a normalization is always possible will be proved in Lemma \ref{larov}.

Due to
\begin{align*}
A(t) + B(t) & = \fA(i,t)^{-1} \pd_t \fA(i,t) \fj \\
& = \begin{bmatrix} \l(t) & -h(t) \\ 0 & \l(t)^{-1} \end{bmatrix} \begin{bmatrix} -\dot\l(t)\l(t)^{-2} & \dot h(t) \\ 0 & \dot\l(t) \end{bmatrix} \fj \\
& = \begin{bmatrix} \pd_t\log \l(t) & \l^2(t)\pd_t (h(t)/\l(t)) \\ 0 & \pd_t \log \l(t) \end{bmatrix},
\end{align*}
we have
\begin{align*}
A(t) & = \begin{bmatrix} \pd_t\log \l(t) & \frac 1 2\l^2(t)\pd_t (h(t)/\l(t)) \\ \frac 1 2\l^2(t)\pd_t (\overline{h(t)}/\l(t)) & \pd_t \log \l(t) \end{bmatrix} \\
& = \begin{bmatrix} a(t) & b(t) + i c(t) \\ b(t) - i c(t) & a(t) \end{bmatrix}
\end{align*}
and
\begin{align*}
B(t) & = \begin{bmatrix} 0 & \frac 1 2\l^2(t)\pd_t (h(t)/\l(t)) \\ -\frac 1 2\l^2(t)\pd_t (\overline{h(t)}/\l(t)) & 0 \end{bmatrix} \\
& = \begin{bmatrix} 0 & b(t) + i c(t) \\ -b(t) + i c(t) & 0 \end{bmatrix}.
\end{align*}
In particular,
\begin{equation}\label{22jun1}
\fa_{22}(i,t)=\l(t)=e^{\int_0^t a(t) dt}.
\end{equation}

According to the Potapov--de Branges theorem, for an arbitrary $w(z)$, there exists a monotonic chain $\{\fA(z,t)\}_{t\in[0,t_0)}$ of $j$-expanding matrix functions such that $\fA(z,0) = I$, and for an arbitrary $t\in (0,t_0)$, there exists a representation
\begin{equation}\label{23may1}
w(z) = \frac{\fa_{11}(z,t)\cE(z,t) + \fa_{12}(z,t)}{\fa_{21}(z,t)\cE(z,t) + \fa_{22}(z,t)},
\end{equation}
where $\cE(z,t)\in \cS$; see \cite[Lemma 9]{dB61} and \cite{dB62,dB,R18}.

Using \eqref{23may1} and \eqref{23may3} we have
$$
1 - |w(x)|^2 = \frac{1-|\cE(x,t)|^2}{|\fa_{22}(x,t)|^2 |1 + \cE(x,t)\fa_{21}(x,t)/\fa_{22}(x,t)|^2}, \quad x \in \bbR.
$$
Note that $1 + \cE(z,t)\fa_{21}(z,t)/\fa_{22}(z,t)$ is an outer function. Moreover, since $\fa_{21}(i,t)=0$ (A-gauge condition), we have
$$
\int \log \left| 1 + \cE(x,t)\fa_{21}(x,t)/\fa_{22}(x,t) \right| \frac{dx}{1+x^2} = 0.
$$
Furthermore, the function $\fa_{22}(z,t)$ is entire and  $1/\fa_{22}(z,t)$ belongs to $H^\infty$. Therefore a possible inner part
of $1/\fa_{22}(z,t)$ must be of the form $e^{i\s_t z}$, $\s_t\ge 0$. We get
$$
\frac 1 \pi \int \log |\fa_{22}(x,t)| \frac{dx}{1+x^2} = \log \fa_{22}(i,t) - \s_t.
$$
Thus,
$$
\frac 1 \pi \int_{\bbR} \log \frac 1{1-|w(x)|^2} \frac{dx}{1+x^2} = \frac 1 \pi \int_{\bbR} \log \frac 1{1-|\cE(x,t)|^2} \frac{dx}{1+x^2} + \log |\fa_{22}(i,t)|^2 - 2 \s_{t}.
$$
Assuming that \eqref{23may7} holds, and using \eqref{22jun1}, we get
\begin{equation}\label{23may 8}
\limsup_{t\to\infty} \left( \int_0^t a(t) \, dt - \s_t \right) \le \frac 1{2\pi} \int_{\bbR} \log \frac 1{1-|w(x)|^2} \frac{dx}{1+x^2}.
\end{equation}

Now we assume that
$$
\liminf_{t\to\infty} \left( \int_0^t a(t) \, dt - \s_t \right) = \lim_{t_k \to\infty} \left( \int_0^{t_k} a(t) \, dt - \s_{t_k} \right) < \infty.
$$
We define
$$
v(\z,t) = \frac{\fa_{12}(z,t)}{\fa_{22}(z,t)} \quad \text{and} \quad \phi(\z,t) = \frac{e^{-i\s_t z}}{\fa_{22}(z,t)}, \quad \z = \frac{z-i}{z+i} \in \bbD.
$$
Since on compact subsets of $\bbD$,
$$
\lim_{t \to \infty} v(\z,t) = v(\z) :=w(z)
$$
and
$$
|v(\z,t)|^2+|\phi(\z,t)|^2\le 1,
$$
we have (for $r<1$)
\begin{align*}
\int_{\bbT} \log \frac{1}{1-|v(r\z)|^2} \, dm(\z) & = \lim_{t \to \infty} \int_{\bbT} \log \frac{1}{1-|v(r\z,t)|^2} \, dm(\z) \\
& \le \lim_{t_k \to \infty} \int_{\bbT} \log \frac{1}{|\phi (r\z,t_k)|^2} \, dm(\z) \\
& = \lim_{t_k \to \infty} \log \frac{1}{|\phi (0,t_k)|^2} \\
& = 2 \lim_{t_k \to \infty} \left( \int_0^{t_k} a(t) \, dt - \s_{t_k} \right).
\end{align*}
We get
\begin{align*}
\int_{\bbT}\log\frac{1}{1-|v(\z)|^2}dm(\z) & = \int_{\bbT}\lim_{r\to 1}\log\frac{1}{1-|v(r\z)|^2} \, dm(\z) \\
& \le 2 \liminf_{t \to \infty} \left( \int_0^t a(t) \, dt - \s_t \right).
\end{align*}
In other words,
\begin{equation}\label{23may 8b}
\frac 1{2\pi}\int_{\bbR}\log\frac 1{1-|w(x)|^2}\frac{dx}{1+x^2}\le\liminf_{t\to\infty}\left(\int_0^ta(t)dt-\s_t\right).
\end{equation}
Combining the inequalities \eqref{23may 8} and \eqref{23may 8b}, we obtain the identity
\begin{equation}\label{21jun1}
2\lim_{t\to\infty}\left(\int_0^ta(t)dt-\s_t\right)=\frac 1{\pi}\int_{\bbR}\log\frac 1{1-|w(x)|^2}\frac{dx}{1+x^2}=\cI(w).
\end{equation}

Now we will compute $\s_t$. To this end we pass to the Potapov-de Branges normalization
$$
\fB(z,t) := \fA(z,t) \fA(0,t)^{-1}.
$$
Since $\fA(0,t)$ is $\fj$-unitary, we get
$$
\fB(z,t) = \fA(z,t) \fj \fA(0,t)^{*} \fj.
$$
Therefore
\begin{align*}
\pd_t \fB(z,t) \fj & = \fA(z,t) (-i z A(t) + B(t)) \fA(0,t)^* + \fA(z,t) B(t)^* \fA(0,t)^* \\
& = - iz \fB(z,t)H(t),
\end{align*}
where
$$
H(t)=\fA(0,t)A(t)\fA(0,t)^*.
$$
According to \cite[Theorem 39]{dB} we obtain
$$
\s_t=\int_0^t\sqrt{\det H(t)} dt=\int_0^t\sqrt{\det A(t)} dt=\int_0^t\sqrt{a(t)^2-b(t)^2-c(t)^2} dt.
$$
Since $2 a(t)=\tr A(t)$, together with \eqref{21jun1}
we obtain the final result \eqref{25may}.

\begin{appendix}

\section{The Potapov--de Branges Gauge and the $j$-Modulus of Potapov}\label{a.1}

In 1955 V.\ P.\ Potapov presented his theory of the multiplicative structure of $J$-contractive matrix functions \cite{P0}. Recall that an analytic matrix-function $W(\z)$ is called $\fj$-contractive if it satisfies an inequality of the form
$$
\fj - W(\z)^* \fj W(\z) \ge 0
$$
at every point $\z$ of its domain $\z\in \bbD$. We will restrict our discussion to the case of $2\times 2$ matrices, whereas Potapov's original considerations take place in arbitrary dimension.

Clearly the class of such matrices is multiplicative, and therefore any natural representation result for this class requires one to find a decomposition of a given matrix function into a product of ``elementary factors.''

A natural objective in this context is the generalization of the Riesz--Herglotz representation of functions of the Schur class,
\begin{equation}\label{23jun1}
w(\z) = e^{i \t_0} \z^n B(\z) e^{\int_{\partial \bbD} \frac{\z+t}{\z-t} \, d\sigma(t)},
\end{equation}
where $\sigma$ is a non-negative measure on the unit circle $\partial \bbD$ and $B(\z)$ is the Blaschke product
$$
B(\z) = \prod_{k \ge 1} B_k(\z), \quad B_k(\z) = \frac{|\z_k|}{\z_k} \frac{\z_k-\z}{1-\z\overline{\z_k}},\ \z_k \in \bbD, \z_k \not= 0.
$$

Any generalization of this kind requires one to overcome the following two serious obstacles:
\begin{itemize}
\item to take into consideration the non-commutativity of matrix multiplication,
\item to find a criterion for the convergence of the product of elementary factors.
\end{itemize}
Note that, unlike in the class of unitary matrices, one can easily find a $\fj$-unitary matrix of arbitrarily large norm; consider, for example,
$$
\fU=\begin{bmatrix}\cosh \phi&\sinh\phi\\
\sinh\phi&\cosh\phi
\end{bmatrix}.
$$
Moreover, it is a highly non-trivial task to control the product of $\fj$-contractive factors.

To overcome the first problem, Potapov used the concept of a multiplicative Stieltjes integrals. In connection with the second problem, he introduced the concept of $\fj$-modulus.

In the classical representation \eqref{23jun1}, in order to control, say, the convergence of the Blaschke product, one normalizes each factor to be positive at a fixed point, $B_k(0)>0$. After that the criterion for convergence is the Blaschke condition
$$
\prod_{k\ge 1} B_k(0) = \prod_{k\ge 1} |\z_k| > 0.
$$

Potapov demonstrates that any non-singular $\fj$-contractive matrix $W$ has a polar representation
\begin{equation}\label{23jun20}
W = \fU R, \quad R = e^{-H\fj},
\end{equation}
where $\fU$ is $\fj$-unitary and $H\ge 0$.

Note that $R$ is $\fj$-hermitian, that is, $\fj R=R^*\fj$, and
$$
W^*\fj W=R^*\fj R=\fj R^2.
$$
Thus, $R$ should be defined as a suitable root of $\fj W^*\fj W$. In Proposition~\ref{prO} below we provide an explicit formula due to Orlov (cf.~\cite{OR}) for the $\fj$-modulus $R$.

Thus, Potopov normalized each factor $R_k$ by the $\fj$-modulus, and after that he arrived at the rather complicated problem of controlling the product
\begin{equation}\label{23jun2}
\prod_{k\ge 1} R_k = \prod_{k \ge 1} e^{-H_k \fj}.
\end{equation}
From \cite[Theorem~9]{P0} we can extract the following theorem.

\begin{theorem}\label{thPm}
Let $H_k \ge 0$ and $\tr H_k \fj = 0$. Then the product \eqref{23jun2} converges if and only if
$$
\sum_{k \ge 1} H_k < \infty.
$$
\end{theorem}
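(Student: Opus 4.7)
The plan is to exploit the identity $(H_k\fj)^2 = (\det H_k)\,I$, which yields an explicit closed form for $R_k := e^{-H_k\fj}$, and then extract summability from the monotonicity of the $\fj$-quadratic form $P_n\fj P_n^*$ along the partial products $P_n := R_1 \cdots R_n$.

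First I will set up the parametrization. The conditions $H_k \ge 0$ and $\tr(H_k\fj) = 0$ force
\[
H_k = \begin{bmatrix} a_k & \overline{h_k} \\ h_k & a_k \end{bmatrix}, \qquad a_k \ge |h_k| \ge 0,
\]
with $\det H_k = a_k^2 - |h_k|^2 =: d_k^2$. A direct computation gives $(H_k\fj)^2 = d_k^2 I$, so
\[
R_k = \cosh(d_k)\,I - \frac{\sinh d_k}{d_k}\,H_k\fj
\]
(with the removable singularity at $d_k = 0$). Since $a_k \le \|H_k\| \le 2a_k$, the three conditions $\sum H_k < \infty$ (understood as convergent partial sums of positive matrices), $\sum \|H_k\| < \infty$, and $\sum a_k < \infty$ are mutually equivalent, and I will work with the last.

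For the easy direction, if $\sum \|H_k\| < \infty$ then $\|R_k - I\| \le e^{\|H_k\|} - 1 \le C\|H_k\|$ for all large $k$, so $\sum \|R_k - I\| < \infty$. A standard infinite product argument then gives convergence: $\|P_n\| \le \exp\bigl(\sum \|R_k - I\|\bigr) < \infty$, and $\|P_{n+1} - P_n\| \le \|P_n\|\,\|R_{n+1} - I\|$ is summable, so $(P_n)$ is Cauchy.

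The main obstacle is the reverse direction, since convergence of the product does not a priori force summability of matrix norms (cancellations could occur in principle). The key is that each $R_k$ is a $\fj$-modulus, hence $\fj$-hermitian ($R_k^* = \fj R_k \fj$) and $\fj$-contractive ($R_k\fj R_k^* \le \fj$). Using these together with $(H_k\fj)^2 = d_k^2 I$, I compute
\[
\fj - R_k\fj R_k^* = (1 - \cosh 2d_k)\,\fj + \frac{\sinh 2d_k}{d_k}\,H_k,
\]
whose trace equals $2a_k \sinh(2d_k)/d_k \ge 4a_k$ by $\sinh x \ge x$. Iterating $R_k\fj R_k^* \le \fj$ along $P_n$ yields the telescoping identity
\[
\fj - P_n\fj P_n^* = \sum_{k=1}^n P_{k-1}\bigl(\fj - R_k\fj R_k^*\bigr) P_{k-1}^*.
\]
Since $\det R_k = e^{-\tr(H_k\fj)} = 1$, the limit $P_\infty$ has $\det P_\infty = 1$ and is invertible, so $P_{k-1}^* P_{k-1} \ge cI$ for some $c > 0$ and all large $k$. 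Taking traces, letting $n \to \infty$, and using the inequality $\tr(XY) \ge \lambda_{\min}(Y)\,\tr(X)$ for PSD matrices $X, Y$, I conclude
\[
\infty > \sum_{k} \tr\bigl((\fj - R_k\fj R_k^*)\,P_{k-1}^* P_{k-1}\bigr) \ge c \sum_{k \ge K} \tr\bigl(\fj - R_k\fj R_k^*\bigr) \ge 4c \sum_{k \ge K} a_k,
\]
which forces $\sum a_k < \infty$, as required.
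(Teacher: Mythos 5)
Your proof is correct, but note that there is no proof in the paper to compare it against: Theorem~\ref{thPm} is extracted from Potapov \cite[Theorem~9]{P0} and quoted without argument, the only convergence statement the paper actually proves being the A-gauge counterpart, Proposition~\ref{prAm}, which concerns upper-triangular factors and rests on a normal-form computation. Your argument is a legitimate self-contained substitute. The parametrization forced by $H_k\ge 0$, $\tr H_k\fj=0$, the Cayley--Hamilton identity $(H_k\fj)^2=(\det H_k)I$, and the closed form $e^{-H_k\fj}=\cosh(d_k)I-\frac{\sinh d_k}{d_k}H_k\fj$ are all correct, and your identity $\fj-R_k\fj R_k^*=(1-\cosh 2d_k)\fj+\frac{\sinh 2d_k}{d_k}H_k$ checks out, with trace $2a_k\sinh(2d_k)/d_k\ge 4a_k$. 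The delicate converse direction is handled soundly: the telescoping identity for $\fj-P_n\fj P_n^*$, the positivity of each summand (the $\fj$-contractivity $R_k\fj R_k^*\le\fj$ you invoke does hold for $H_k\ge 0$ --- it follows, e.g., from $\partial_t\bigl(e^{-tH_k\fj}\fj e^{-t\fj H_k}\bigr)=-2\,e^{-tH_k\fj}H_k e^{-t\fj H_k}\le 0$, or directly from your closed form, and deserves that one-line justification), the observation that $\det R_k=e^{-\tr(H_k\fj)}=1$ forces the limit to be invertible so that $P_{k-1}^*P_{k-1}\ge cI$ for large $k$, and the bound $\tr(XY)\ge\lambda_{\min}(Y)\tr(X)$ for positive semidefinite $X,Y$; the forward direction is the standard absolutely convergent product estimate. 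Compared with the triangular normal form used in the Arov gauge, your route exploits the $\fj$-modulus structure directly, and what it buys is precisely the quantitative lower bound $\tr(\fj-R_k\fj R_k^*)\ge 4a_k$, which makes the converse implication work without invoking Potapov's multiplicative machinery.
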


A particular case of his main theorem on the multiplicative representation is the following statement (where we switched back from the unit disk to the upper half plane).

\begin{theorem}\cite[p.133]{P0} An entire matrix function $\fA(z)$, $\fA(0)=I$, which is $\fj$-expanding in the upper half-plane and $\fj$-unitary on the real axis, can be presented in the form
\begin{equation}\label{23jun3}
\fA(z) = \int\displaylimits_0^{\overset{\ell}{\curvearrowright}} e^{-iz H(t) \fj \, dt},
\end{equation}
where $H(t)$ is a summable non-negative definite matrix function.
\end{theorem}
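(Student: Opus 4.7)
The plan is to realize $\fA(z)$ as the endpoint of a monotonic chain of $\fj$-inner entire matrix functions and then read off $H(t)$ from the associated canonical system in the Potapov--de Branges gauge. Concretely, I would construct a family $\{\fA(z,t)\}_{t\in[0,\ell]}$ of $\fj$-inner entire matrix functions with $\fA(z,0)=I$, $\fA(z,\ell)=\fA(z)$, satisfying the monotonicity identity used in the remark following Theorem~\ref{t.main}; the remark then produces a canonical system
$$
\pd_t\fA(z,t)\fj=\fA(z,t)(-izA(t)+B(t)),\qquad A(t)\ge 0,\ B(t)^*=-B(t).
$$

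Next, I would pass to the PdB-gauge by multiplying the chain on the right by a smooth $\fj$-unitary cocycle $\fU(t)$ chosen so that the $B$-term is absorbed. This is precisely what Potapov's polar decomposition \eqref{23jun20} was designed to do: at each level $t$, factor $\fA(z,t)$ as a $\fj$-unitary piece times its $\fj$-modulus, and collect the $\fj$-unitary pieces into $\fU(t)$. In the renormalized chain, the canonical system reduces to
$$
\pd_t\fA(z,t)=-iz\fA(z,t)H(t)\fj,\qquad H(t)=A(t)\ge 0,\quad\fA(z,0)=I,
$$
and the multiplicative Stieltjes integral in \eqref{23jun3} is, by definition, the Peano product-integral solution of this linear ODE. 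Hence, once the chain and its $H(t)$ exist, the representation is automatic from ODE uniqueness.

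The substance of the theorem therefore lies in two points: (a) construction of the chain, and (b) the summability $H\in L^1([0,\ell])$. For (a), I would use the de Branges space $\cH(\fA)$ associated with the $\fj$-inner entire function $\fA(z)$ (the fact that $\fA(z)$ is $\fj$-unitary on $\bbR$ is what makes this space well-posed) together with its canonical totally ordered family of isometrically embedded subspaces $\cH(\fA_t)\subset\cH(\fA)$; the parameter $t$ is chosen as a suitably normalized ``length'' along the chain to ensure absolute continuity in $t$. For (b), I would proceed by discretization: approximate $\fA(z)$ by finite products of elementary $\fj$-inner factors,
$$
\fA(z)\approx\prod_{k=1}^{N}e^{-iz H_k^{(N)}\fj},\qquad H_k^{(N)}\ge 0,
$$
and invoke the convergence criterion Theorem~\ref{thPm} to control $\sum_k H_k^{(N)}$ uniformly in $N$. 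A weak-$*$ compactness argument then identifies the limit as an $L^1$ matrix density $H(t)\,dt$; the a priori bound $\sum_k H_k^{(N)}\le C$ is supplied by the fact that $\fA(z)$ is entire, so the approximants are uniformly bounded on compact subsets of $\bbC$.

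I expect the principal obstacles to be not the ODE manipulation but (i) showing that the chain can be parametrized so that $H(t)$ exists as a genuine Radon--Nikodym derivative, i.e.\ that the chain is absolutely continuous rather than merely of bounded variation, and (ii) handling non-commutativity: one must verify that the ordered product integral actually reproduces $\fA(z)$, and that the measure $H(t)\,dt$ is canonically attached to $\fA(z)$ independent of the particular factorization. Both points are precisely where Potapov's $\fj$-modulus plays the crucial role, because it fixes the factorization order and turns the convergence of the non-commutative product \eqref{23jun2} into the commutative (scalar) condition $\sum_k H_k<\infty$ of Theorem~\ref{thPm}.
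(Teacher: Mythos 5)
First note that the paper does not prove this statement at all: it is quoted from Potapov \cite[p.133]{P0}, and the surrounding remark only explains what the multiplicative integral \eqref{23jun3} means and that it solves $\pd_t\fA(z,t)\fj=\fA(z,t)(-izH(t))$. So the only question is whether your outline would constitute a proof, and as written it does not: the two points you yourself isolate as ``the substance of the theorem'' are exactly the points left unproved. The existence of a monotone, absolutely continuous chain of $\fj$-inner divisors joining $I$ to $\fA(z)$ (your step (a)) is not supplied by the remark following Theorem~\ref{t.main} --- that remark starts from a chain that is already given and differentiable in $t$ and merely reads off $A(t)$, $B(t)$. Producing the chain is the inverse monodromy problem itself; invoking ``the canonical totally ordered family of de Branges subspaces of $\cH(\fA)$'' is an appeal to de Branges' ordering theorem, a result of comparable depth which is only cited here \cite{dB61,dB}, and even granting it you must still choose the parameter so that the chain is absolutely continuous rather than merely monotone (your obstacle (i)); the standard device (reparametrizing so that, say, $\tr H(t)$ is normalized almost everywhere, which also yields summability on $[0,\ell]$) is never carried out.

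Second, your discretization route to summability is circular as written: approximants $\fA(z)\approx\prod_k e^{-izH_k^{(N)}\fj}$ with $H_k^{(N)}\ge 0$ are precisely finite pieces of the representation \eqref{23jun3} being proved, and no mechanism is offered for producing them from the given entire $\fj$-inner function. Moreover, Theorem~\ref{thPm} is a convergence criterion for a \emph{fixed} sequence of $\fj$-moduli, not an a priori estimate, so the claim that entirety of $\fA$ forces $\sum_k H_k^{(N)}\le C$ uniformly in $N$ needs an actual argument (e.g.\ an evaluation at a fixed point of $\bbC_+$ comparing $\fA$ there with the product of the moduli), which is missing. A further detail: after the gauge transformation $\fB(z,t)=\fA(z,t)\fA(0,t)^{-1}$ the Hamiltonian is $H(t)=\fA(0,t)A(t)\fA(0,t)^*$, not $A(t)$ itself (compare the computation in Section~\ref{s.3}); this is harmless only if you build the chain normalized by $\fA(0,t)=I$ from the outset, in which case $B(t)=0$ automatically and the polar decomposition \eqref{23jun20} plays no role in that reduction. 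In short, the skeleton has the right shape --- product integral plus $\fj$-modulus normalization plus Theorem~\ref{thPm} is indeed Potapov's own strategy --- but the analytic core (existence of the divisor chain or of the approximating factorizations, and the uniform summability bound) is deferred rather than proved.
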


\begin{remark}
In \eqref{23jun3} Potapov uses the concept of multiplicative integral, which is given by
$$
\int\displaylimits_0^{\overset{\ell}{\curvearrowright}}  e^{-iz H(t) \fj \, dt} = \lim_{\max \Delta_k \to 0} \prod_{k=1}^n e^{-iz H(\tau_k) \fj \Delta_k}
$$
for a partition
$$
0 = t_0 \le \tau_1 \le t_1 \le \dots t_{n-1} \le \tau_n \le t_n = \ell, \quad \Delta_k = t_k-t_{k-1}.
$$
Note that for $z=i$, we have a product of $\fj$-moduli. It is a well-known fact, see \cite[p. 135]{P0}, that this multiplicative integral solves the integral equation
$$
\pd_t \fA(z,t) \fj = \fA(z,t)(-iz H(t)), \quad \fA(z,t) = \int\displaylimits_0^{\overset{t}{\curvearrowright}} e^{-iz H(t) \fj \, dt}.
$$
Thus, his integral representation \eqref{23jun3} provides the existence of a solution of the inverse \textit{monodromy problem}: as soon as $\fA(z)$ is given and represents a $\fj$-inner entire matrix function, one can find a suitable $H(t)$, $0 \le t \le \ell$, so that $\fA(z) = \fA(z,\ell)$.
\end{remark}

The uniqueness problem was open until 1961 \cite{dB61}. Moreover, de Branges gave a solution of an inverse \textit{spectral problem}. As soon as
$$
\int_0^\infty \tr H(t) \, dt = \infty,
$$
the multiplicative integral
$$
\int\displaylimits_0^{\overset{\infty}{\curvearrowright}} e^{-iz H(t) \fj \, dt}
$$
diverges; nevertheless the limit of the ratio in \eqref{23may2} makes sense, and this is enough to reconstruct the canonical system under the PdB-gauge condition uniquely up to a monotonic change of the variable $t$; see \cite[Theorems XI and XII]{dB61}.

Finally we note that Potapov himself revised his Theorem~\ref{thPm}, see \cite{P1, P2}, and Orlov found an explicit representation for the $\fj$-modulus \cite{OR}.

For the reader's convenience, we give a proof of Orlov's theorem.

\begin{proposition}\label{prO}
Let $\fA$ be invertible and
$$
\G = \fj - \fA^* \fj \fA \ge 0.
$$
Then,
\begin{equation}\label{31may0}
I - \G^{1/2} \fj \G^{1/2} \ge 0
\end{equation}
and
\begin{equation}\label{31may1}
R = I - \fj \G^{1/2} \left( I + (I - \G^{1/2} \fj \G^{1/2})^{1/2} \right)^{-1} \G^{1/2}.
\end{equation}
\end{proposition}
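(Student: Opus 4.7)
The plan is to verify the explicit formula \eqref{31may1} by direct computation, reducing everything to algebraic identities about the Hermitian operator $K := \G^{1/2}\fj\G^{1/2}$. The verification has two parts: first, the positivity statement \eqref{31may0}, which makes the square root $(I-K)^{1/2}$ meaningful; and second, the identities $R^*\fj = \fj R$ and $R^2 = \fj\fA^*\fj\fA$, together with the normalization $R = I$ at $\G = 0$, which identify the right-hand side of \eqref{31may1} with the $\fj$-modulus $R = e^{-H\fj}$ from \eqref{23jun20}.

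For \eqref{31may0}, note that $K$ is Hermitian, so the claim is $\sigma(K) \subset (-\infty, 1]$. Assuming first that $\G$ is invertible, the similarity $K = \G^{1/2}(\fj\G)\G^{-1/2}$ gives $\sigma(K) = \sigma(\fj\G) = \sigma(I - \fj\fA^*\fj\fA)$, after using $\fj\G = \fj(\fj - \fA^*\fj\fA) = I - \fj\fA^*\fj\fA$; hence \eqref{31may0} is equivalent to $\sigma(\fj\fA^*\fj\fA) \subset [0,\infty)$. The latter is exactly what Potapov's polar decomposition \eqref{23jun20} yields: $\fj\fA^*\fj\fA = \fj R^*\fj R \cdot \fj^2 = R^2 = e^{-2H\fj}$ with $H \ge 0$, so its eigenvalues $e^{\pm 2\mu}$ (where $\pm\mu \in \sigma(H\fj) \subset \bbR$) are positive. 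The case of singular $\G$ then follows by continuity in $\G$.

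Now write $P := (I + (I-K)^{1/2})^{-1}$, so that \eqref{31may1} becomes $R = I - \fj\G^{1/2}P\G^{1/2}$. Since $K$ is Hermitian, so are $(I-K)^{1/2}$ and $P$, and one immediately checks $R^*\fj = \fj - \G^{1/2}P\G^{1/2} = \fj R$, so $R$ is $\fj$-Hermitian. The key algebraic identity is $(I + (I-K)^{1/2})(I - (I-K)^{1/2}) = K$, which by the definition of $P$ rearranges to $KP = I - (I-K)^{1/2} = 2I - P^{-1}$, and hence $KP^2 = 2P - I$. Using this and $K = \G^{1/2}\fj\G^{1/2}$, one computes
\begin{equation*}
R^2 = I - 2\fj\G^{1/2}P\G^{1/2} + \fj\G^{1/2}PKP\G^{1/2} = I + \fj\G^{1/2}(KP^2 - 2P)\G^{1/2} = I - \fj\G,
\end{equation*}
which equals $\fj\fA^*\fj\fA$ as required. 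Continuity in $\G$ starting from $R = I$ at $\G = 0$, together with the positivity of $\sigma(R^2)$ established above, selects the correct square root, identifying $R$ with $e^{-H\fj}$ for some $H \ge 0$.

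The main obstacle is the positivity \eqref{31may0}: purely algebraic manipulations via $\fj = \G + \fA^*\fj\fA$ introduce indefinite cross terms in $\G^{1/2}\fj\G^{1/2}$, so the proof must pass through a spectral identification with $\fj\fA^*\fj\fA$ and leverage Potapov's existence result for the polar decomposition (or, alternatively, a Krein-space connectedness argument inside the set of invertible $\fj$-contractions). The remainder is routine once the identity $KP^2 = 2P - I$ is noticed.
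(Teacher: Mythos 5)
Your argument is correct in substance, but it takes a genuinely different route from the paper, most visibly for \eqref{31may0}. There you identify $\sigma(\G^{1/2}\fj\G^{1/2})$ with $\sigma(\fj\G)=\sigma(I-\fj\fA^*\fj\fA)$ (for invertible $\G$) and then invoke Potapov's polar decomposition \eqref{23jun20} to see that $\fj\fA^*\fj\fA=e^{-2H\fj}$ has positive spectrum, finishing by approximation when $\G$ is singular. The paper instead gives a one-line algebraic proof: starting from the dual inequality $\fj-\fA\fj\fA^*\ge 0$ and sandwiching, $0\le \fA^*\fj(\fj-\fA\fj\fA^*)\fj\fA=\G-\G\fj\G=\G^{1/2}(I-\G^{1/2}\fj\G^{1/2})\G^{1/2}$, which yields \eqref{31may0} with no spectral theory, no appeal to the existence of the modulus, and no approximation of singular $\G$. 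In particular your meta-claim that ``purely algebraic manipulations must fail'' is not accurate --- one just has to start from $\fj-\fA\fj\fA^*\ge0$ rather than from $\fj=\G+\fA^*\fj\fA$. For \eqref{31may1} the two proofs are the same algebra run in opposite directions: your identity $KP=I-(I-K)^{1/2}$, i.e.\ $KP^2=2P-I$, is exactly the paper's $X^{1/2}-I=(X-I)(X^{1/2}+I)^{-1}$ with $X=I-K$. The paper, for $\G>0$, writes $R^2=I-\fj\G=\G^{-1/2}(I-K)\G^{1/2}$ and takes the square root through the similarity, so the resulting matrix is automatically similar to $(I-K)^{1/2}\ge0$ and hence is the modulus; the $\G^{-1/2}$ is then removed by the identity above, giving a formula that extends to all $\G\ge0$. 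Your verify-by-squaring argument instead shows the right-hand side of \eqref{31may1} is a $\fj$-Hermitian square root of $I-\fj\G$ and must then argue it is the \emph{correct} one; your continuity-from-$\G=0$ sketch can be completed (the class of invertible $\fj$-contractions is path-connected, $\fj\fA^*\fj\fA$ stays invertible with positive spectrum along a path, so the eigenvalues of the candidate stay real positive and uniqueness of the square root with spectrum in the open right half-plane applies), but this selection step and the approximation of singular $\G$ (e.g.\ by $\fA e^{-\epsilon\fj}$) are extra work that the paper's derivation avoids; on the other hand, your version has the virtue of being a direct check of the stated formula once \eqref{31may0} is known.
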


\begin{proof}
It is known that
$$
\fj - \fA \fj \fA^* \ge 0
$$
and therefore
\begin{align*}
0 & \le \fA^* \fj ( \fj - \fA \fj \fA^* ) \fj \fA \\
& = \fA^* \fj \fA - \fA^* \fj \fA \fj \fA^* \fj \fA \\
& = \fj - \G - ( \fj - \G ) \fj ( \fj - \G ) \\
& = \G - \G \fj \G \\
& = \G^{1/2} (I - \G^{1/2} \fj \G^{1/2}) \G^{1/2}.
\end{align*}
Thus, the first statement of the proposition, \eqref{31may0}, is proved.

Now we assume that $\G > 0$. Then,
$$
R^2 = I - \fj \G = \G^{-1/2} ( I - \G^{1/2} \fj \G^{1/2}) \G^{1/2}.
$$
That is, $R^2$ is similar to the positive matrix $I - \G^{1/2}\fj\G^{1/2}\ge 0$. We get the following representation,
\begin{equation}\label{32may1}
R = \G^{-1/2} ( I - \G^{1/2} \fj \G^{1/2})^{1/2} \G^{1/2}.
\end{equation}

To pass to the general case, we use the identity
$$
X^{1/2} - I = (X - I)(X^{1/2} + I)^{-1},
$$
which holds for an arbitrary $X \ge 0$. Therefore \eqref{32may1} can be rewritten as
\begin{align*}
R & = I + \G^{-1/2} \left( (I - \G^{1/2} \fj \G^{1/2})^{1/2} - I \right) \G^{1/2} \\
& = I + \G^{-1/2} \left( (I - \G^{1/2} \fj \G^{1/2}) - I \right) \left( (I - \G^{1/2} \fj \G^{1/2})^{1/2} + I \right)^{-1} \G^{1/2} \\
& = I - \fj \G^{1/2} \left( (I - \G^{1/2} \fj \G^{1/2})^{1/2} + I \right)^{-1} \G^{1/2}.
\end{align*}
The last representation makes sense for an arbitrary $\G \ge 0$. Thus, \eqref{31may1} is proved.
\end{proof}

\section{The Arov Gauge and Unitary Extensions of an Isometry}\label{a.2}

During the final years of his life, Potapov was looking for a bridge connecting his multiplicative theory with classical interpolation problems of Nevanlinna-Pick type \cite{KP}, see also \cite{KO}.

The term classical interpolation problem follows the book \cite{AKH}, where different methods of solving these problems are presented. Among them is the reduction of interpolation problems to unitary extensions of isometric operators. M. G. Krein was one of the founders and protagonists of this approach, see especially \cite{AAK2}.

Trying to develop Potapov's approach, Kheifets and Yuditskii \cite{KY} were able to select data of the Abstract Interpolation Problem (AIP), which unified all known interpolation problems. But it clearly indicated that the best way of solving of AIP deals with the description of characteristic functions of unitary extensions of an isometry. The most elegant form of the solution to the latter problem is given in the paper \cite{AG} by Arov and Grossman, and we briefly recall its setting and solution.

By a \emph{unitary node} we mean a unitary operator $U$ acting from the Hilbert space $H\oplus E_1$ to $H\oplus E_2$. $H$ is called \textit{state space} and the spaces $E_1$ and $E_2$ are called \textit{coefficient spaces}. With this set of data we can associate an open linear dynamical system
$$
U(h_n\oplus u_n)=h_{n+1}\oplus v_n,\quad h_0=0, \quad u_n\in E_1,\ v_n\in E_2.
$$
The system $\{u_n\}$ is called \emph{input} and $\{v_n\}$ is the \emph{output}. It is easy to verify that the Fourier transforms of input and output,
$$
u(\z) = \sum_{n=0}^\infty u_n \z^n, \quad v(\z) = \sum_{n=0}^\infty v_n \z^n, \quad \z \in \bbD,
$$
are related by
$$
v(\z) = w(\z) u(\z),
$$
where $w(\z)$ is the Schur class operator valued function given by
\begin{equation}\label{23jun5}
w(\z) = w(\z,U) = P_{E_2} (I - \z UP_H)^{-1} U|_{E_1}.
\end{equation}
Here, $P_H$ and $P_{E_2}$ are the orthogonal projections onto the corresponding subspaces. We call $w(\z)$ the \emph{characteristic function} of the unitary node (with respect to the given coefficient spaces).

\begin{problem}\label{prue}
Let $V: K \oplus E_1 \to K \oplus E_2$ be an isometry with the defect spaces $N_1=N_{d_V}$ and $N_2=N_{\Delta_V}$,
$$
V : d_V \to \Delta_V, \quad K \oplus E_1 = d_V \oplus N_{d_V},\ K \oplus E_2 = \D_V \oplus N_{\D_V}.
$$
The unitary operator $U : H \oplus E_1 \to H \oplus E_2$, $K \subset H$, is called a \emph{unitary extension} of $V$ if
$$
U|_{d_V}=V.
$$
Describe the characteristic functions $w(\z)$ of unitary extensions of the given isometry $V$.
\end{problem}

To solve this problem we first define the Arov-Grossman extension
\begin{equation}\label{23jun6}
A : K \oplus E_1 \oplus N_2 \to K \oplus N_1 \oplus E_2
\end{equation}
given by
$$
A|_{d_V} = V
$$
$$
A|_{N_{d_V}}=id: N_{d_V}\to N_1,\quad A|_{N_2}=id: N_{2}\to N_{\D_V}.
$$
We point out that by making this extension, we extend the coefficient spaces, but the state space $K$ remains the same. We denote the corresponding characteristic operator function by
$$
S(\z) = P_{N_1 \oplus E_2} (I - \z AP_K)^{-1} A|_{E_1 \oplus N_2}.
$$
From the definition of the operator $A$ and the operator function $S(\z)$ one can see that
$$
P_{N_1} S(0)|_{N_2} = P_{N_1} A|_{N_2} = 0.
$$
In other words, if we split $S(\z)$ in blocks
$$
S(\z) = \begin{bmatrix} s_1(\z) & s(\z) \\ s_0(\z) & s_2(\z) \end{bmatrix} : \begin{bmatrix} E_1 \\ N_2 \end{bmatrix} \to \begin{bmatrix} N_1 \\ E_2
\end{bmatrix},
$$
then $s(0) = 0$.

\begin{theorem}\cite{AG}
The set of solutions of Problem~\ref{prue} is parametrized by the Schur class operator functions $\cE(\z)$ acting from $N_1$ to $N_2$. For an arbitrary parameter $\cE(\z)$, the solution $w(\z)$ is given by the formula
\begin{equation}\label{23jun10}
w(\z) = s_0(\z) + s_2(\z) \cE(\z) (I_{N_1} - s(\z) \cE(\z))^{-1} s_1(\z).
\end{equation}
\end{theorem}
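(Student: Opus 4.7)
The strategy is to prove the theorem as a two-sided parametrization, showing both that every Schur parameter $\cE(\z)$ produces a unitary extension whose characteristic function is given by \eqref{23jun10}, and conversely that every unitary extension arises from a unique such $\cE(\z)$. The underlying geometric picture is a \emph{feedback closure} of the Arov--Grossman extension $A$ in \eqref{23jun6}: since $A$ is already unitary but defined between the enlarged coefficient spaces $E_1 \oplus N_2$ and $N_1 \oplus E_2$, any further unitary extension of $V$ is obtained by dynamically routing the surplus output channel $N_1$ back into the surplus input channel $N_2$ through some contractive (possibly frequency-dependent) coupling, and the freedom in choosing that coupling is precisely a Schur operator function.

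For sufficiency, I would realize an arbitrary $\cE(\z)\in\cS$ as the characteristic function of some unitary node $\tilde U: H' \oplus N_1 \to H' \oplus N_2$ using the Sz.-Nagy--Foias functional model. Then I would define $U$ on $H \oplus E_1 \to H \oplus E_2$ with $H := K \oplus H'$ by coupling $A$ and $\tilde U$: the $N_1$-component of the output of $A$ is fed into the $N_1$-input of $\tilde U$, and the $N_2$-output of $\tilde U$ is fed into the $N_2$-input of $A$. Unitarity of $U$ follows from unitarity of both $A$ and $\tilde U$ together with orthogonality of the two state-space summands, and $U|_{d_V}=V$ holds by construction. Computing $w(\z,U)$ from \eqref{23jun5}, I would use the block structure of $(I - \z U P_H)^{-1}$ with respect to $H = K \oplus H'$ and solve the resulting $2\times 2$ linear system for the state variables. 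Eliminating the $H'$-state via $\tilde U$ produces $\cE(\z)$, eliminating the $K$-state via $A$ produces the four blocks $s_1, s, s_2, s_0$, and collecting terms yields exactly \eqref{23jun10}; the condition $s(0)=0$ guarantees $(I - s(\z)\cE(\z))^{-1}$ is analytic at $\z=0$, hence (being bounded by a Neumann-series argument at the origin and extended by the maximum principle since $\|s(\z)\cE(\z)\|\le 1$) is a well-defined Schur multiplier on all of $\bbD$.

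For necessity, given a unitary extension $U: H \oplus E_1 \to H \oplus E_2$, I would set $H':= H\ominus K$ and read the Arov--Grossman construction in reverse. Because $U|_{d_V}=V=A|_{d_V}$ and $U$ is unitary between the \emph{smaller} coefficient spaces $E_1$ and $E_2$ (rather than the enlarged $E_1\oplus N_2$ and $N_1\oplus E_2$ of $A$), the difference between $U$ and $A$ lives entirely on the defect subspaces $N_{d_V} \subset K\oplus E_1$ and $N_{\Delta_V} \subset K\oplus E_2$, and is determined by how $U$ couples these defects through the additional state space $H'$. That coupling defines a unitary node on $H' \oplus N_1 \to H' \oplus N_2$ whose characteristic function is the required $\cE(\z)\in\cS$; substituting this $\cE$ into \eqref{23jun10} and repeating the sufficiency computation recovers $w(\z,U)$.

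\textbf{Main obstacle.} The cleanest part of the argument is the formal Redheffer/chain-scattering identity leading to \eqref{23jun10}; the hard part is the necessity direction, specifically the clean operator-theoretic extraction of $\cE$ from an \emph{arbitrary} unitary extension $U$, including ones where the state space $H$ is not minimal. One must either invoke the Sz.-Nagy--Foias canonical decomposition to split off a unitary summand from $H\ominus K$ that carries no information (matched by the non-uniqueness of realization of $\cE$), or restrict the parametrization statement to minimal unitary extensions (observable and controllable pairs), in which case the bijection is sharp. I would follow the latter route and remark that the characteristic function $w(\z)$ in \eqref{23jun5} is invariant under such unitarily-equivalent minimalizations, so the parametrization by $\cE\in\cS$ is faithful at the level of characteristic functions.
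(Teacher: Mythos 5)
The paper does not prove this theorem; it is imported verbatim from Arov--Grossman \cite{AG}, so there is no internal proof to compare against. Judged on its own, your proposal follows what is in fact the standard Arov--Grossman argument: realize the parameter $\cE$ as the characteristic function of an auxiliary unitary node, couple it to the universal node $A$ through the feedback channels $N_1\to N_2$, and read off \eqref{23jun10} as the Redheffer transform of the block transfer function $S(\z)$. The structural points you identify are the right ones, including the role of $s(0)=P_{N_1}A|_{N_2}=0$; note that this same identity is what makes the \emph{operator-level} feedback loop well-posed (the equation for the internal signal $n_1$ has no implicit term), which is needed before one can even assert that the coupled $U$ is a well-defined unitary, not only that $(I-s(\z)\cE(\z))^{-1}$ is analytic at the origin.

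Where you misjudge the difficulty is the necessity direction. Extracting $\cE$ from an \emph{arbitrary} unitary extension $U$ requires neither the Sz.-Nagy--Foias canonical decomposition nor a restriction to minimal extensions: since $U$ is unitary and $U|_{d_V}=V$ maps $d_V$ onto $\Delta_V$, the operator $U$ restricted to $d_V^{\perp}=N_1\oplus H'$ (with $H'=H\ominus K$) is a unitary map onto $\Delta_V^{\perp}=N_2\oplus H'$, i.e.\ it \emph{is} the auxiliary node $\tilde U:H'\oplus N_1\to H'\oplus N_2$, and its characteristic function is the desired $\cE\in\cS$. A direct check then shows the coupling of $A$ with this $\tilde U$ reproduces $U$ itself, not merely its characteristic function. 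The genuine remaining issue, which your proposal does not address, is the injectivity of the map $\cE\mapsto w$ claimed by the word ``parametrized'': distinct Schur parameters must yield distinct $w$, and non-minimal realizations of a given $\cE$ must yield the same $w$. The second point follows from the invariance of characteristic functions you mention; the first requires an additional argument (inverting the Redheffer transform using the structure of $s_1,s_2$), which is where the real content of the bijectivity statement lies.
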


\begin{remark}
The fractional linear transform of the form \eqref{23jun10} is called \emph{Redheffer transform}. Let us point out that the inverse operator in this formula is well-defined since $s(0)=0$, and therefore $\| s(\z) \cE(\z) \| \le |\z|$.
\end{remark}

\begin{remark}
The formula \eqref{23jun10} can be rewritten as the following algebraic identity,
$$
\begin{bmatrix} s_1(\z) & s(\z) \\ s_0(\z) & s_2(\z) \end{bmatrix} \begin{bmatrix} I \\ \cE(\z) \varphi(\z) \end{bmatrix} = \begin{bmatrix} \varphi(\z) \\ w(\z) \end{bmatrix},
$$
or
$$
\begin{bmatrix} I & -s_0(\z) \\ 0 & - s_1(\z) \end{bmatrix} \begin{bmatrix} w(\z) \\ I \end{bmatrix} = \begin{bmatrix} s_2(\z) & 0 \\ s(\z) & -I \end{bmatrix} \begin{bmatrix} \cE(\z) \\ I \end{bmatrix} \varphi(\z).
$$
As soon as $s_1(\z)$ is invertible, we can pass to the identity
$$
\begin{bmatrix} w(\z) \\ I \end{bmatrix} = W(\z) \begin{bmatrix} \cE(\z) \\ I \end{bmatrix} \varphi(\z),
$$
that is,
$$
w(\z) = (W_{11}(\z) \cE(\z) + W_{12}(\z)) (W_{21}(\z) \cE(\z) + W_{22}(\z))^{-1},
$$
where
$$
W(\z) = \begin{bmatrix} W_{11}(\z) & W_{12}(\z) \\ W_{21}(\z) & W_{22}(\z) \end{bmatrix} = \begin{bmatrix} I & -s_0(\z) \\ 0 & - s_1(\z) \end{bmatrix}^{-1} \begin{bmatrix} s_2(\z) & 0 \\ s(\z) & -I \end{bmatrix}.
$$
The last formula is called the \emph{Potapov-Ginzburg transform}. It allows us to pass from contractive to $\fj$-expanding matrix (operator) functions. The normalization condition $s(0) = 0$ in this case corresponds to $W_{21}(0) = 0$, that is, $W(0)$ is upper triangular, which is essentially the A-gauge condition.
\end{remark}

For the A-gauge, we can easily prove a counterpart of \eqref{23jun20}. That is, we prove that any $\fj$-contractive matrix with $\det \fB = 1$ can be normalized so that it will satisfy the A-gauge fixing condition corresponding to \eqref{e.agauge}.

\begin{lemma}
Let $\fB$ be a $\fj$-contractive matrix with $\det \fB = 1$. It admits a unique representation
$$
\fB = \begin{bmatrix} \fb_{11} & \fb_{12} \\ \fb_{21} & \fb_{22} \end{bmatrix} = \fA\, \fU, \quad \fA = \begin{bmatrix} \l^{-1} & h \\ 0 & \l
\end{bmatrix}, \quad \l \ge 1.
$$
where $\fU \in SU(1,1)$.
\end{lemma}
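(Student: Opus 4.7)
The plan is to read this as a hyperbolic QR decomposition: since $SU(1,1)$ acts on the rows of $\fB$ preserving the form $\fj$, and we want $\fA = \fB \fU^{-1}$ to be upper triangular with second row $(0,\l)$, the problem reduces to finding a $\fj$-unitary rotation (on the right) that kills the $(2,1)$-entry. The positivity of the diagonal and the normalization $\l \ge 1$ will come directly from the contractivity hypothesis, read in its equivalent $\fj$-expanding form, which supplies the crucial scalar inequality $|\fb_{22}|^2 - |\fb_{21}|^2 \ge 1$.

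For existence I parametrize $\fU = \left[\begin{smallmatrix} a & b \\ \bar b & \bar a \end{smallmatrix}\right]$ with $|a|^2 - |b|^2 = 1$. The requirement that the second row of $\fB \fU^{-1}$ equal $(0,\l)$, i.e. $(0,\l)\fU = (\fb_{21},\fb_{22})$, immediately forces $\bar b = \fb_{21}/\l$ and $\bar a = \fb_{22}/\l$. The $SU(1,1)$-constraint $|a|^2 - |b|^2 = 1$ then pins down $\l^2 = |\fb_{22}|^2 - |\fb_{21}|^2$. The hypothesis, written equivalently as $\fB \fj \fB^* - \fj \ge 0$ in the sense of \eqref{23may4}, has $(2,2)$-entry exactly $|\fb_{22}|^2 - |\fb_{21}|^2 - 1 \ge 0$, so $\l \ge 1$ is achievable as the positive square root. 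Having $\fU$ in hand, I compute $\fB \fU^{-1}$ with $\fU^{-1} = \left[\begin{smallmatrix} \bar a & -b \\ -\bar b & a \end{smallmatrix}\right]$: the $(2,1)$-entry vanishes by construction, the $(2,2)$-entry equals $\l$, and the $(1,1)$-entry simplifies via $\det\fB = 1$ to $(\fb_{11}\fb_{22} - \fb_{12}\fb_{21})/\l = 1/\l$. The $(1,2)$-entry is free and defines $h$, yielding the required form of $\fA$.

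For uniqueness, suppose $\fA_1 \fU_1 = \fA_2 \fU_2$. Then $\fV := \fA_2^{-1}\fA_1 = \fU_2 \fU_1^{-1}$ is simultaneously upper triangular of the prescribed form and a member of $SU(1,1)$. Writing $\fV = \left[\begin{smallmatrix} \mu^{-1} & k \\ 0 & \mu \end{smallmatrix}\right]$ with $\mu \ge 1$ and imposing $\fV^* \fj \fV = \fj$ gives three scalar equations whose only solution is $\mu = 1$, $k = 0$; hence $\fV = I$ and the factors coincide.

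The only real obstacle is book-keeping with sign conventions: one must confirm that the hypothesis ``$\fj$-contractive with $\det\fB=1$'' is indeed equivalent to $\fB \fj \fB^* \ge \fj$ (this is the standard passage between $\fB$ and $\fj\bar\fB\fj$ that also underlies the Potapov--Ginzburg transform recalled in Appendix~\ref{a.2}) so that the scalar inequality $|\fb_{22}|^2-|\fb_{21}|^2 \ge 1$ is available. Once this is pinned down, the argument is pure linear algebra, and no deeper input is required.
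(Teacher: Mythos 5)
Your construction is essentially the paper's: both triangularize $\fB$ by a right $SU(1,1)$ factor (the paper assembles it from three elementary factors --- a phase alignment, a hyperbolic rotation, a diagonal sign correction --- while you solve for the second row of $\fU$ in one step), and your uniqueness argument (an upper-triangular $\fj$-unitary matrix with positive diagonal must be the identity) is exactly the paper's appeal to \eqref{eq:july2}. Your version is in fact slightly more explicit, since you exhibit $\l^2=|\fb_{22}|^2-|\fb_{21}|^2$ and thereby account both for $\l\ge 1$ and for the solvability of the triangularization, which the paper leaves tacit.

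One correction to your closing paragraph: the hypothesis ``$\fj$-contractive with $\det\fB=1$'' is \emph{not} equivalent to $\fB\fj\fB^*\ge\fj$, so the confirmation you defer cannot be carried out. With the definition of Appendix~\ref{a.1}, $\fj-\fB^*\fj\fB\ge0$ yields $|\fb_{11}|^2-|\fb_{21}|^2\ge1$, not the inequality you need; for instance $\fB=\bigl[\begin{smallmatrix}2&1\\-1&0\end{smallmatrix}\bigr]$ satisfies $\fj-\fB^*\fj\fB\ge0$ and $\det\fB=1$, yet $|\fb_{22}|^2-|\fb_{21}|^2=-1$, while the second row of any product $\fA\,\fU$ with $\fU=\bigl[\begin{smallmatrix}a&b\\\bar b&\bar a\end{smallmatrix}\bigr]\in SU(1,1)$ is $(\l\bar b,\l\bar a)$, so that $|(\fA\fU)_{22}|^2-|(\fA\fU)_{21}|^2=\l^2>0$; hence no factorization of the stated form exists for this $\fB$, and the lemma is false under the literal ``contractive'' reading. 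The statement, and your argument verbatim, are correct when the hypothesis is read in the $\fj$-expanding sense $\fB^*\fj\fB\ge\fj$ of \eqref{23may40} --- which is how the lemma is actually used in Section~\ref{s.3}, namely for $\fA(i,t)$ with $i\in\bbC_+$ --- because for invertible matrices \eqref{23may40} is equivalent to \eqref{23may4}, and the $(2,2)$ entry of the latter supplies $|\fb_{22}|^2-|\fb_{21}|^2\ge1$. (The paper's own three-step proof tacitly needs the same inequality, via $|\fb_{21}|<|\fb_{22}|$ for the hyperbolic rotation, so the defect is in the wording of the hypothesis, not in your construction.)
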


\begin{proof}
First we multiply $\fB$ by a diagonal $\fj$-unitary $\fU_1$ to have the same arguments for $\fb_{21}$ and $\fb_{22}$,
$$
\fB^{(1)}=\fB\fU_1,\quad \fU_1=\begin{bmatrix} e^{i\phi_1}&0\\0&e^{-i\phi_1}
\end{bmatrix}.
$$
After that we can make the resulting matrix upper triangular using a hyperbolic rotation $\fU_2$,
$$
\fB^{(2)} = \fB^{(1)} \fU_2, \quad \fU_2 = \begin{bmatrix} \cosh \phi_2 &  \cosh \phi_2 \\ \cosh \phi_2 & \cosh \phi_2 \end{bmatrix}.
$$
Due to $\det \fB^{(2)} = 1$, we have $\fb_{11}^{(2)} \fb_{22}^{(2)} = 1$. Therefore, multiplication by one more diagonal matrix $\fU_3 \in SU(1,1)$ makes the main diagonal entries positive. As a result, we have
\begin{align}\label{eq:july1}
\fA := \fB \fU_1 \fU_2 \fU_3 = \begin{bmatrix} \l^{-1} & h \\ 0 & \l \end{bmatrix}.
\end{align}
For uniqueness note that if a $\fj$-unitary matrix is of the form \eqref{eq:july1}, then $\l=1$ and $h=0$, see \eqref{eq:july2} below.
\end{proof}

A counterpart of Potapov's Theorem~\ref{thPm} is also the comparably easy Proposition~\ref{prAm} below.

Let us first prove a lemma.

\begin{lemma}\label{larov}
Let $\fA$ be an upper triangular matrix of the form
$$
\fA = \begin{bmatrix} \l^{-1} & h \\ 0 & \l \end{bmatrix}, \quad \l \ge 1.
$$
It is $j$-expanding if and only if
\begin{equation}\label{12jun1}
|h| \le \frac 1\l-\l.
\end{equation}
\end{lemma}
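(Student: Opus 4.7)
The plan is to reduce the $\fj$-expanding requirement \eqref{23may40} to a constant-coefficient algebraic test and then apply Sylvester's criterion. Since $\fA$ in the lemma does not depend on a spectral variable, there is no need to consider $z \in \bbC_+$ separately; I only need to check that the single Hermitian matrix $\fA^*\fj\fA - \fj$ is positive semi-definite.

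A direct computation with $\fj = \mathrm{Diag}(-1,1)$ and the given upper-triangular form produces
\[
\fA^*\fj\fA - \fj = \begin{bmatrix} 1 - \l^{-2} & -h/\l \\ -\bar h/\l & \l^2 - 1 - |h|^2 \end{bmatrix}.
\]
The $(1,1)$-entry is non-negative automatically by the standing assumption $\l \ge 1$, so Sylvester's criterion leaves two conditions: the $(2,2)$-entry inequality $|h|^2 \le \l^2 - 1$, and non-negativity of the determinant. Expanding the determinant, the mixed term $|h|^2/\l^2$ combines with the $|h|^2$-part of the diagonal product as $-|h|^2(1-\l^{-2}) - |h|^2\l^{-2} = -|h|^2$, so after cancellation the determinant collapses to $(1-\l^{-2})(\l^2-1) - |h|^2 = (\l - \l^{-1})^2 - |h|^2$. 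The determinant condition therefore reads $|h|^2 \le (\l - \l^{-1})^2$.

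Comparing the two constraints via $(\l - \l^{-1})^2 = \l^2 - 2 + \l^{-2} \le \l^2 - 1$ for $\l \ge 1$, the determinant bound is strictly stronger than the $(2,2)$-entry bound, hence is the binding inequality. Thus $\fA$ is $\fj$-expanding if and only if $|h| \le \l - \l^{-1}$, equivalently $|h| \le |\l^{-1} - \l|$, which is the bound recorded in \eqref{12jun1}.

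The entire argument is elementary. The only step that requires real care is the determinant computation, where a sign slip arising from the indefinite signature of $\fj$ is easy to make and would destroy the clean factorisation $(\l - \l^{-1})^2$ that ultimately gives the result; once this factorisation is in hand, the equivalence with \eqref{12jun1} is immediate.
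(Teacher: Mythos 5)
Your proof is correct and follows essentially the same route as the paper: compute $\fA^*\fj\fA-\fj$ explicitly and observe that its determinant factors as $(\l-\l^{-1})^2-|h|^2$, which is the binding condition. You are in fact slightly more careful than the paper's one-line conclusion, since you check that the diagonal-entry conditions are implied by the determinant condition, and you correctly read \eqref{12jun1} as $|h|\le \l-\l^{-1}$ (the printed right-hand side $\tfrac 1\l-\l$ is non-positive for $\l\ge 1$, an evident sign slip in the statement).
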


\begin{proof}
Indeed,
\begin{align}\label{eq:july2}
\fA^* j \fA - j = \begin{bmatrix} -\l^{-1} & 0 \\ - \overline h & \l \end{bmatrix} \begin{bmatrix} \l^{-1} & h \\ 0 & \l \end{bmatrix} - j = \begin{bmatrix} 1 - \l^{-2} & -h \l^{-1} \\ -\overline h \l^{-1} & \l^2 - |h|^2 - 1 \end{bmatrix}.
\end{align}
Therefore,
$$
\det \left( \fA^* j \fA - j \right) = \frac{(\l^2-1)^2}{\l^2} - (\l^2-1) \frac{|h|^2}{\l^2} - \frac{|h|^2}{\l^2} = \frac{(\l^2-1)^2}{\l^2} - |h|^2.
$$
The last expression is nonnegative if and only if \eqref{12jun1} holds.
\end{proof}

\begin{proposition}\label{prAm}
Let
$$
\fB_n = \begin{bmatrix} \L_n^{-1} & H_n \\ 0 & \L_n \end{bmatrix} := \prod_{k=1}^n \fA_k, \quad \fA_k = \begin{bmatrix} \l_k^{-1} & h_k \\ 0 & \l_k
\end{bmatrix}
$$
be a product of $j$-expanding matrices. The infinite product converges if and only if the infinite product of $\l_k$ converges.
\end{proposition}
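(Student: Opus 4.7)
The plan is to write the partial product $\fB_n$ in closed form, reduce matrix convergence to convergence of two scalar quantities, and then apply the $j$-expansive estimate from Lemma~\ref{larov} to control the off-diagonal entry via the diagonal one.

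A straightforward induction on the one-step recursion $\fB_{n+1}=\fB_n\fA_{n+1}$ gives $\L_n=\prod_{k=1}^n \l_k$ together with
$$
H_n = \L_n\sum_{k=1}^n \frac{h_k}{\L_{k-1}\L_k},\qquad \L_0:=1.
$$
Since we are in a fixed finite dimension, convergence of $\fB_n$ is equivalent to entrywise convergence, which is in turn equivalent to convergence of the scalar sequence $\L_n$ together with convergence of the scalar series $\sum_k h_k/(\L_{k-1}\L_k)$. One implication of the proposition is then immediate: convergence of $\fB_n$ forces convergence of its $(2,2)$-entry $\L_n$.

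For the converse, suppose $\L_n\to\L_\infty<\infty$. Since $\l_k\ge 1$, the monotone product $\prod\l_k$ converges iff $\sum_k\log\l_k<\infty$, and using the asymptotics $\log\l_k\sim\tfrac12(\l_k-\l_k^{-1})$ as $\l_k\to 1$ (which is forced by the convergence) one obtains $\sum_k(\l_k-\l_k^{-1})<\infty$. Lemma~\ref{larov} supplies the $j$-expansive bound $|h_k|\le\l_k-\l_k^{-1}$, and since $\L_{k-1},\L_k\ge 1$,
$$
\sum_{k=1}^\infty \frac{|h_k|}{\L_{k-1}\L_k}\;\le\;\sum_{k=1}^\infty(\l_k-\l_k^{-1})\;<\;\infty.
$$
Thus $\sum_k h_k/(\L_{k-1}\L_k)$ converges absolutely; combined with $\L_n\to\L_\infty\ge 1$ (and hence $\L_n^{-1}\to\L_\infty^{-1}$), this yields convergence of $H_n$ and of the full matrix product $\fB_n$.

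The only nontrivial ingredient is Lemma~\ref{larov}, and beyond invoking it the proof is essentially bookkeeping, so I do not foresee any serious obstacle. The A-gauge upper-triangular structure keeps the partial products explicit, and the bound $|h_k|\le\l_k-\l_k^{-1}$ is perfectly calibrated to match the diagonal growth $\L_{k-1}\L_k$ appearing in the denominator of the representation for $H_n$.
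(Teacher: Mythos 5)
Your proof is correct, and it hinges on the same key ingredient as the paper (Lemma~\ref{larov}), but the implementation is genuinely different. The paper never writes $H_n$ in closed form: it applies Lemma~\ref{larov} to the tail products $\fB_{n,m}=\prod_{k=n+1}^m\fA_k$, which are again upper triangular and $j$-expanding, to get $\|\fB_{n,m}-I\|\lesssim \L_{n,m}-\L_{n,m}^{-1}$ with $\L_{n,m}=\L_m/\L_n$, bounds $\|\fB_m-\fB_n\|\le\|\fB_n\|\,\|\fB_{n,m}-I\|\le 2C(\L_{n,m}-\L_{n,m}^{-1})$, and concludes via the Cauchy criterion; convergence of $\L_n$ makes $\L_{n,m}\to 1$, while the converse is read off the $(2,2)$ entry. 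You instead apply the lemma factor by factor, derive the explicit representation $H_n=\L_n\sum_{k\le n}h_k/(\L_{k-1}\L_k)$ by induction, and prove absolute convergence of the series; this is slightly longer but more constructive, giving a formula for the limit, and in fact the telescoping identity $(\l_k-\l_k^{-1})/(\L_{k-1}\L_k)=\L_{k-1}^{-2}-\L_k^{-2}$ shows the series $\sum_k h_k/(\L_{k-1}\L_k)$ converges absolutely even without assuming $\prod\l_k<\infty$, which your coarser bound does not reveal. One small point in your favor: you use the bound in the form $|h_k|\le\l_k-\l_k^{-1}$, which is what the determinant computation in the proof of Lemma~\ref{larov} actually yields (the inequality $|h|\le\frac1\l-\l$ as printed in the lemma's statement is a typo, since its right-hand side is nonpositive for $\l\ge1$); the paper's own proof of the proposition implicitly uses the corrected form as well.
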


\begin{proof}
Due to Lemma~\ref{larov},
$$
\| \fB_n \| \le \L_n^{-1} + \L_n + |H_n| \le C < \infty.
$$
For $m>n$, let
$$
\fB_{n,m} = \begin{bmatrix} \L_{n,m}^{-1} & H_{n,m} \\ 0 & \L_{n,m} \end{bmatrix} := \prod_{k=n+1}^m \fA_k.
$$
Then,
\begin{align*}
\| \fB_m - \fB_n \| & \le \|\fB_n\| \| \fB_{n,m} - I \| \\
& \le C(1 - \L_{n,m}^{-1} + \L_{n,m} - 1 + \L_{n,m} - \L_{n,m}^{-1}) \\
& = 2 C (\L_{n.m} - \L_{n,m}^{-1}).
\end{align*}
Thus, the sequence $\{\fB_n\}$ is Cauchy if and only if the sequence $\{\L_n\}$ is Cauchy.
\end{proof}

%

\end{appendix}

\end{document}